\documentclass{article}
\usepackage{graphicx}
\graphicspath{{./pictures/}}
\usepackage{amsmath,amsfonts,amsthm}
\allowdisplaybreaks

\usepackage{url}

\usepackage{amsmath,amssymb,amsthm}
\usepackage[colorlinks=true]{hyperref}
\hypersetup{urlcolor=blue, citecolor=red}

  \textheight=8.2 true in
   \textwidth=5.0 true in
    \topmargin 30pt
     \setcounter{page}{1}

\allowdisplaybreaks

\newtheorem{theorem}{Theorem}
\theoremstyle{definition}

\title{Lagrangian dynamics \\
by nonlocal constants of motion}






\date{\emph{June 3, 2019}}

\begin{document}
\maketitle

\centerline{\scshape Gianluca Gorni}
\medskip
{\footnotesize
 \centerline{Universit\`a di Udine}
   \centerline{Dipartimento di Scienze Matematiche, Informatiche e Fisiche}
   \centerline{via delle Scienze~208, 33100 Udine, Italy}
   \centerline{\tt{gianluca.gorni@uniud.it}}}
 

\medskip

\centerline{\scshape Gaetano  Zampieri}
\medskip
{\footnotesize
 \centerline{Universit\`a di Verona}
   \centerline{Dipartimento di Informatica}
   \centerline{strada Le Grazie 15, 37134 Verona, Italy}
   \centerline{\tt{gaetano.zampieri@univr.it}}}


\begin{abstract}
A simple general theorem is used as a tool that generates nonlocal constants of motion for Lagrangian systems. We review some cases where the constants that we find are useful in the study of the systems: the homogeneous potentials of degree~$-2$, the mechanical systems with viscous fluid resistance and the conservative and dissipative Maxwell-Bloch equations of laser dynamics. We also prove a new result on explosion in the past for mechanical system with hydraulic (quadratic) fluid resistance and bounded potential.
\end{abstract}

\section{Introduction}
\label{introduction}

Consider the finite-dimensional \emph{variational} Euler-Lagrange equation 
 \begin{equation}\label{Euler}
  \frac{d}{dt}\partial_{\dot q}L\bigl(t,q(t),\dot q(t)\bigr)-
  \partial_qL\bigl(t,q(t),\dot q(t)\bigr)
  =0\,,
\end{equation}
where the Lagrangian  $L(t,q,\dot q)$ is a smooth function, with $t\in \mathbb{R}$, $q,\dot q\in\mathbb{R}^n$. We use the notation $\partial_{q}$ and $\partial_{\dot q}$ for the partial derivative operators with respect to the vector~$q$ and~$\dot q$ respectively, $\lvert x\rvert$ and $x\cdot y$~for the Euclidean norm and scalar product of vectors~$x,y\in\mathbb{R}^n$.

A~first integral is a smooth function of the form
\begin{equation}\label{trueFirstIntegral}
  N(t,q,\dot q),\qquad t\in \mathbb{R},
  \quad q,\dot q\in\mathbb{R}^n,
\end{equation}
that is constant along all solutions to Euler-Lagrange equation. The celebrated Noether's theorem establishes a connection between first integrals and certain invariance properties of the Lagrangian function~$L$.

A previous work of ours~\cite{GorniZampieri} revisited Noether's Theorem from different points of view, including asynchronous perturbations (or ``time change'') and boundary  terms, this last being a nomenclature recommended by Leach~\cite{Leach}. In the present paper we focus on the extension we obtained to constants of motion of the more general form
\begin{equation}\label{genericNonlocaConstantOfMotion}
  N\bigl(t,q(t),\dot q(t)\bigr)+
  \int_{t_0}^t M\bigl(s,q(s),\dot q(s)\bigr)ds\,,
\end{equation}
which we call \emph{nonlocal}, because its value at a time~$t$ depends not only on the value of position and velocity at time~$t$, but also on the past history of the motion.

In later works (see, e.g.,~\cite{GZKilling}) we extended the results to the \emph{nonvariational} case, where an extra term~$Q$ appears on the right-hand side of the differential equation, as in formula~\eqref{Lagrange} below. For such systems the motions are not stationary points of the action functional associated with the Lagrangian~$L$, in the sense of the calculus of variations.

The basic, very simple result on nonlocal constants of motion in that paper~\cite{GZKilling} can be reformulated in the following self-contained way, which is all that is needed for the sequel:

\begin{theorem}\label{mainTheorem}
Let $t\mapsto q(t)$ be a solution to the Lagrange equation
\begin{equation}
\label{Lagrange}
  \frac{d}{dt}\partial_{\dot q}L\bigl(t,q(t),\dot q(t)\bigr)-
  \partial_qL\bigl(t,q(t),\dot q(t)\bigr)
  = Q\bigl(t,q(t),\dot q(t)\bigr) 
\end{equation}
for smooth $L(t,q,\dot q)$, $Q(t,q,\dot q)$, with $t\in \mathbb{R}$, $q,\dot q\in\mathbb{R}^n$, and let $q_\lambda(t)$, $\lambda\in \mathbb{R}$, be a smooth family of  perturbed motions, such that $q_0(t)\equiv q(t)$. Then the following function is constant:
\begin{multline}\label{veryGeneralConstantAlongMotion}
  t\mapsto
    \boxed{\partial_{\dot q}
  L\bigl(t,q(t),\dot q(t)\bigr)\cdot
  \partial_\lambda q_\lambda(t)
  \big|_{\lambda=0}-
  \int_{t_0}^t\biggl(
  \frac{\partial}{\partial\lambda}
  L\bigl(s,q_\lambda(s),\dot q_\lambda(s)\bigr)
  \big|_{\lambda=0}+{}}\\
  \boxed{{}+
  Q\bigl(s,q(s),\dot q(s)\bigr)\cdot \partial_\lambda q_\lambda(s)
  \big|_{\lambda=0}\biggr)ds}\,.
\end{multline}
\end{theorem}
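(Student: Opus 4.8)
The plan is to show that the time derivative of the boxed expression is identically zero along the given solution. The key computational tool is that differentiation with respect to $\lambda$ and differentiation (or integration) with respect to $t$ commute, since the family $q_\lambda$ is smooth.

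Let me sketch the standard approach for this kind of "Noether-with-source" identity.

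First I would define, for brevity, $F(t) := \partial_{\dot q}L(t,q(t),\dot q(t))\cdot\partial_\lambda q_\lambda(t)|_{\lambda=0}$ and $G(s) := \partial_\lambda L(s,q_\lambda(s),\dot q_\lambda(s))|_{\lambda=0} + Q(s,q(s),\dot q(s))\cdot\partial_\lambda q_\lambda(s)|_{\lambda=0}$, so the claim is that $F(t) - \int_{t_0}^t G(s)\,ds$ is constant. By the fundamental theorem of calculus it suffices to prove $F'(t) = G(t)$ for all $t$.

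Key steps in order:

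First, differentiate $F$ with respect to $t$ using the product rule:
\[
F'(t) = \frac{d}{dt}\Bigl(\partial_{\dot q}L(t,q(t),\dot q(t))\Bigr)\cdot\partial_\lambda q_\lambda(t)\big|_{\lambda=0} + \partial_{\dot q}L(t,q(t),\dot q(t))\cdot\frac{d}{dt}\partial_\lambda q_\lambda(t)\big|_{\lambda=0}.
\]
In the first term, invoke the Lagrange equation \eqref{Lagrange} to replace $\frac{d}{dt}\partial_{\dot q}L$ by $\partial_q L(t,q,\dot q) + Q(t,q,\dot q)$. This immediately produces the term $Q(t,q(t),\dot q(t))\cdot\partial_\lambda q_\lambda(t)|_{\lambda=0}$, which is one half of $G(t)$.

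Second, handle the remaining two terms, namely $\partial_q L\cdot\partial_\lambda q_\lambda|_{\lambda=0} + \partial_{\dot q}L\cdot\frac{d}{dt}\partial_\lambda q_\lambda|_{\lambda=0}$. Here I would swap the order of the $t$-derivative and the $\lambda$-derivative in the second summand, writing $\frac{d}{dt}\partial_\lambda q_\lambda = \partial_\lambda\frac{d}{dt}q_\lambda = \partial_\lambda\dot q_\lambda$ (Schwarz's theorem, valid by smoothness of $(t,\lambda)\mapsto q_\lambda(t)$). Then recognize
\[
\partial_q L(t,q,\dot q)\cdot\partial_\lambda q_\lambda\big|_{\lambda=0} + \partial_{\dot q}L(t,q,\dot q)\cdot\partial_\lambda\dot q_\lambda\big|_{\lambda=0} = \frac{\partial}{\partial\lambda}L\bigl(t,q_\lambda(t),\dot q_\lambda(t)\bigr)\Big|_{\lambda=0}
\]
by the chain rule (the explicit $t$-dependence of $L$ contributes nothing since we differentiate in $\lambda$). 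This is exactly the other half of $G(t)$. Combining, $F'(t) = G(t)$, which is what we wanted.

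**The main obstacle** — to the extent there is one in such an elementary statement — is purely bookkeeping: making sure the chain rule is applied to the correct set of arguments and that the $\lambda$-derivative of the explicitly-$t$-dependent Lagrangian does not spuriously generate extra terms. No analytic subtlety arises because everything is assumed smooth and we are only manipulating a first-order ODE identity pointwise in $t$; there are no convergence or regularity issues to address. So the proof is essentially a one-line computation: differentiate the boxed quantity, substitute \eqref{Lagrange}, commute $\partial_t$ with $\partial_\lambda$, and collapse via the chain rule.
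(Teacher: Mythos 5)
Your proposal is correct and is exactly the paper's argument, merely spelled out: differentiate the boxed quantity in $t$, substitute the Lagrange equation~\eqref{Lagrange}, commute $\frac{d}{dt}$ with $\partial_\lambda$, and recombine via the chain rule into $\partial_\lambda L|_{\lambda=0}$. No differences to report.
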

\begin{proof}
Take $\frac{d}{dt}$ of~\eqref{veryGeneralConstantAlongMotion}, and use~\eqref{Lagrange} and $\frac{d}{dt}\partial_\lambda q_\lambda(t)
 =\partial_\lambda\dot q_\lambda(t)$ at $\lambda=0$.
\end{proof}

The constant \eqref{veryGeneralConstantAlongMotion} is often trivial or of no apparent practical value, but there are cases when it is interesting and useful. In the rest of this paper we will review some applications in the variational case
\begin{itemize}
\item potentials with simple symmetries in Section~\ref{symmetrySection} as basic motivation,
\item homogeneous potentials of degree~$-2$ in Section~\ref{hompotentialsdegree-2}, taken from~\cite{GorniZampieri},
\item viscous fluid resistance in Section~\ref{viscousfluidresistance}, taken from~\cite{GZviscous},
\end{itemize}
and two in the nonvariational case:
\begin{itemize}
\item hydraulic fluid resistance in Section~\ref{hydraulicfluidresistance},
\item the Maxwell-Bloch equations for laser dynamics in Section~\ref{MBequations}, taken from~\cite{GZMBcons} in the conservative case and from~\cite{GZKilling} and~\cite{laserdiss} in the dissipative case.
\end{itemize}
The result in Section~\ref{hydraulicfluidresistance} is actually new: for a particle in $\mathbb{R}^n$ under quadratic fluid resistance and a bounded, nonnegative potential energy, we prove the explosion in the past in finite time of all solutions with initial kinetic energy greater than the upper bound of the potential energy.

\section{Lagrangians with simple symmetries}
\label{symmetrySection}

The perturbed motions $q_\lambda(t)$ of Theorem~\ref{mainTheorem} were originally inspired by the mechanism that Noe\-ther's theorem uses to deduce conservation laws for variational Lagrangian systems (for which $Q\equiv0$) under certain symmetry conditions on~$L$. A~simple example is a particle of mass $m$ in the plane that is driven by a central force field 
\begin{equation}
  L(t,q,\dot q):=\frac{1}{2}m \lvert\dot q\rvert^2-
  U\bigl(t,\lvert q\rvert\bigr),
  \quad  q=(q_1,q_2)\in \mathbb{R}^2.
\end{equation} 
To exploit the rotational symmetry of~$L$ it is natural to take the \emph{rotation family}
\begin{equation}
  q_\lambda(t):=\begin{pmatrix}
  \cos\lambda & -\sin\lambda\\
  \sin\lambda & \cos\lambda
  \end{pmatrix} q(t),\qquad
  \partial_\lambda q_\lambda(t)
  \big|_{\lambda=0}=\bigl(-q_2(t), q_1(t)\bigr)\,.
\end{equation}
It is clear that $L(t,q_\lambda(t),\dot q_\lambda(t))$ does not depend on~$\lambda$. Formula~\eqref{veryGeneralConstantAlongMotion} reduces to a simple version of Noether's theorem and gives the \emph{angular momentum} as constant of motion:
\begin{equation*}
  \partial_{\dot q}
  L\cdot\partial_\lambda q_\lambda
  \big|_{\lambda=0}=m\dot q\cdot (-q_2, q_1)=
  m(q_1\dot q_2-q_2\dot q_1).
\end{equation*}

A simple, somewhat less conventional use of the theorem is the following. For time independent $L(t,q,\dot q)=\mathcal{L}(q,\dot q)$, $Q\equiv 0$, and the \emph{time-shift} family $q_{\lambda}(t)=q(t+\lambda)$ we have
\begin{equation*}
  \partial_{\lambda}L\bigl(t,q_{\lambda}(t),
  \dot q_{\lambda}(t)\bigr)
  \big|_{\lambda=0}=\partial_{q}\mathcal{L}\cdot \dot q(t)+
  \partial_{\dot q}\mathcal{L}\cdot \ddot q(t).
\end{equation*}
The constant of motion is
\begin{align*}
  \partial_{\dot q}\mathcal{L}\cdot \dot q(t)-{}&
  \int_{t_0}^t \frac{d}{ds}\mathcal{L}\bigl(q(s),\dot q(s)\bigr)ds=\\  
  &=\partial_{\dot q}\mathcal{L}\bigl(q(t),\dot q(t)\bigr)\cdot
  \dot q(t)-\mathcal{L}\bigl(q(t),\dot q(t)\bigr)+
  \mathcal{L}\bigl(q(t_0),\dot q(t_0)\bigr)=\\
  &=E\bigl(q(t),\dot q(t)\bigr)+
  \mathcal{L}\bigl(q(t_0),\dot q(t_0)\bigr),
\end{align*}
which coincides with the \emph{energy}
\begin{equation}
 E(q,\dot q)=
  \partial_{\dot q}\mathcal{L}(q,\dot q)\cdot \dot q-\mathcal{L}(q,\dot q)
\end{equation}
up to the additive constant $\mathcal{L}(q(t_0),\dot q(t_0))$.
For instance, when the Lagrangian is $\mathcal{L}(q,\dot q):=\frac{1}{2}m \lvert\dot q\rvert^2- U(q)$, the conserved energy takes the classical form of kinetic plus potential energies: $E(q,\dot q)=\frac{1}{2}m\lvert \dot q\rvert^2+U(q)$.

\section{Homogeneous potentials of degree $-2$}
\label{hompotentialsdegree-2}

In this section we are going to review a result in our previous work \cite{GorniZampieri}, Section~9. Consider the variational mechanical system of a point moving in a potential field:
\begin{equation}\label{hom}
  L(t,q,\dot q):=
  \frac{1}{2}m\lvert\dot q\rvert^2-U(q),\qquad
  Q\equiv 0,\qquad
  m\ddot q=-\nabla U(q),
\end{equation}
and assume that the potential $U$ is positively \emph{homogeneous} of degree~$-2$: 
\begin{equation}
  U(sq)=s^{-2}U(q), \qquad\forall s>0.
\end{equation}

Two notable examples are the \emph{central} potential case
\begin{equation}
  U(q)= -k/|q|^2, \qquad q\in\mathbb{R}^n\setminus\{0\},
\end{equation}
and \emph{Ca\-lo\-ge\-ro's}  potential
\begin{equation*}
  U(q_1,\dots,q_n)=g^2\sum_{1\le j<k\le n}(q_j-q_k)^{-2},
\end{equation*}
for $q_j\in\mathbb{R}$, $q_j\ne q_k$ when $j\ne k$, see Calogero's paper \cite{Calogero}.

All these systems enjoy a remarkable symmetry: if $q(t)$ is solution to the last of~\eqref{hom}, then
\begin{equation*}
  q_\lambda(t)=e^\lambda q\bigl(e^{-2\lambda}t\bigr), 
\end{equation*}
is a solution too. Theorem~\ref{mainTheorem} associates to this family $q_\lambda$ the following constant of motion
\begin{align*}
  K={}&\partial_{\dot q}L\cdot\partial_\lambda q_\lambda(t)
  \big|_{\lambda=0}
  -\int\frac{\partial}{\partial\lambda}
  L\bigl(t,q_\lambda(t),\dot q_\lambda(t)\bigr)
  \big|_{\lambda=0}dt=\\
  ={}&m\dot q\cdot (q-2t\dot q)-2tU=
  m\dot q\cdot (q-2t\dot q)+2tL. 
\end{align*}
This time-dependent local constant of motion can be rewritten in terms of the energy~$E$, which is constant too:
\begin{equation*}
  K=m\dot q\cdot q-2t E,\qquad 
  E:=\frac{1}{2}m\lvert\dot q\rvert^2+U(q).
\end{equation*}
Take the antiderivative in time of $0=mq\cdot\dot q-2tE-K$ and obtain one more  constant of motion
\begin{equation*}
  K_1=\frac{1}{2}m\lvert q\rvert^2-t^2E-tK\,.
\end{equation*}
We can  solve for~$\lvert q\rvert$:
\begin{equation}
   \bigl|q(t)\bigr|=\sqrt{\frac{2}{m}}\sqrt{t^2E+tK+K_1}\,.
\end{equation}
This formula gives the explicit \emph{time-dependence of the distance from the origin}, even though we don't know the shape of the orbit.

\section{Viscous fluid resistance}
\label{viscousfluidresistance}

This Section reviews the main result of our paper \cite{GZviscous}.
Consider a particle under a \emph{bounded from below} potential $U\colon\mathbb{R}^n\to\mathbb{R}$ and \emph{viscous (linear) fluid resistance}:
\begin{equation}\label{dissipativeequation}
  m\ddot q=-k\dot q-\nabla U(q),
  \qquad U\ge 0,\quad q\in\mathbb{R}^n,
\end{equation}
where $m$ and $k>0$ are parameters. The mechanical energy 
\begin{equation}\label{mechanicalenergy}
  \frac{1}{2}m\lvert\dot q\rvert^2+U(q)
\end{equation}
decreases along solutions $q(t)$ and $\dot q(t)$ is bounded in the future:
\begin{equation*}
  \frac{1}{2}m \bigl|\dot q(t)\bigr|^2\le
 \frac{1}{2}m \bigl|\dot  
  q(t_0)\bigr|^2+U\bigl(q(t_0)\bigr),
  \qquad t\ge t_0.
\end{equation*}
So $q(t)$ is bounded for bounded $t$ and we get \emph{global existence  in the future}. What about the past? Equation~\eqref{dissipativeequation} can be put into Lagrangian form \eqref{Lagrange} with
\begin{equation*}
   L(t,q,\dot q):=e^{kt/m}\Bigl(
   \frac{1}{2}m \lvert\dot q\rvert^2-U(q)
   \Bigr),\qquad Q\equiv 0.
\end{equation*}
Incidentally, a study of Noether symmetries and conservation laws for this Lagrangian function has been made by Leone and Gourieux~\cite{Leone}.

Let us apply Theorem~\ref{mainTheorem} with the family $q_\lambda(t):=q(t+\lambda e^{kt/m})$. Then computing the nonlocal constant of motion~\eqref{veryGeneralConstantAlongMotion} and integrating by parts we have a simple formula for the constant of motion:
\begin{equation*}
  e^{2kt/m}\Bigl(m\lvert\dot q(t)\rvert^2
  +2U\bigl(q(t)\bigr)\Bigr)
  -\frac{4k}{m}\int^{t}_{t_0} e^{2ks/m}
  U\bigl(q(s)\bigr)ds.
\end{equation*}
Since $U\ge 0$, the integral term increases with~$t$, forcing the  remaining part 
$$e^{2kt/m}\Bigl(m\lvert\dot q(t)\rvert^2+2U\bigl(q(t)\bigr)\Bigr)$$ to be increasing too. We deduce the inequalities
\begin{align}
  me^{2kt/m}\lvert\dot q(t)\rvert^2\le{}&
  e^{2kt/m}
  \Bigl(\lvert\dot q(t)\rvert^2+2U\bigl(q(t)\bigr)
  \Bigr)\le\\
  \le{}&e^{2kt_0/m}
  \Bigl(\lvert\dot q(t_0)\rvert^2+2U\bigl(q(t_0)\bigr)
  \Bigr)\qquad \forall t\le t_0.
\end{align}
In a bounded interval $(t_1,t_0]$ the velocity $\dot q(t)$ is bounded, and therefore $q(t)$ is~too. This proves \emph{global existence of solutions also in the past}.

\section{Explosion in the past for hydraulic fluid resistance}
\label{hydraulicfluidresistance}

We are going to see a new result. Consider the equation for \emph{hydraulic resistance in a bounded potential field}:
\begin{equation}\label{quadraticResistanceEquation}
  m\ddot q=-k\lvert \dot q\rvert\dot q
  -\nabla U\bigl(q(t)\bigr),
  \quad q\in\mathbb{R}^n,
\end{equation}
where $m, k>0$ are parameters, and the smooth potential is bounded:
\begin{equation}\label{boundsForPotential}
  0\le U(q)\le U_{\sup}<+\infty
  \qquad\forall q\in \mathbb{R}^n.
\end{equation}
The same argument as in the previous section shows that we have global existence in the future. 

We cannot expect global existence in the past already in the simple one-di\-men\-sion\-al example $m\ddot q=- k\lvert \dot q\rvert\dot q$, $q\in \mathbb{R}$, for which all nonconstant solutions are of the form $q(t)= \frac{m}{k}\log(\omega(t-t_0))$, for parameters $\omega>0$, $t_0\in\mathbb{R}$, which are only defined for $t>t_0$.

To investigate possible non-globality in the past in the general case of hydraulic resistance in a bounded potential field, let us put this system into the Lagrange nonvariational formulation~\eqref{Lagrange} with the choices
\begin{equation}
  L(t,q,\dot q):=
  \frac{1}{2}m\lvert\dot q\rvert^2-U(q),\qquad
  Q(t,q,\dot q):=-k \dot q\lvert\dot q\rvert.
\end{equation}
If we take the family $q_\lambda(t):=q(t+\lambda e^{-at})$, with $a>0$, from formula~\eqref{veryGeneralConstantAlongMotion}, we obtain the following constant of motion:
\begin{equation}
  e^{-at}m\lvert\dot q(t)\rvert^2
  +\int_{t_0}^t e^{-as}\Bigl(
  \lvert\dot q(s)\rvert^3+am\lvert\dot q(s)\rvert^2
  +\nabla U\bigl(q(s)\bigr)\cdot\dot q(s)
  -m\ddot q(s)\cdot\dot q(s)\Bigr)ds,
\end{equation}
which can be rewritten, after a couple of integrations by parts, as
\begin{multline}\label{quadraticConstantOfMotion}
  \frac{m}{2}e^{-at}\lvert\dot q(t)\rvert^2
  +e^{-at}U\bigl(q(t)\bigr)-e^{-at_0}U\bigl(q(t_0)\bigr)+\\
  +\frac{1}{2}\int_{t_0}^t e^{-as}\Bigl(
  2k\lvert\dot q(s)\rvert^3
  +am\lvert\dot q(s)\rvert^2
  +2aU\bigl(q(s)\bigr)\Bigr)ds
  \equiv \frac{m}{2}e^{-at_0}\lvert\dot q(t_0)\rvert^2.
\end{multline}
Crucially, the left-hand side is monotonic with respect to the value of $\lvert\dot q\rvert$.

When $t<t_0$, from~\eqref{quadraticConstantOfMotion} and~\eqref{boundsForPotential} we can write the inequality
\begin{align}
  \frac{m}{2}e^{-at_0}\lvert\dot q(t_0)\rvert^2\equiv{}&
  \frac{m}{2}e^{-at}\lvert\dot q(t)\rvert^2
  +e^{-at}U\bigl(q(t)\bigr)-e^{-at_0}U\bigl(q(t_0)\bigr)+{}\notag\\
  &{}+\frac{1}{2}\int_{t_0}^t e^{-ks}\Bigl(
  2k\lvert\dot q(s)\rvert^3+am\lvert\dot q(s)\rvert^2
  +2aU\bigl(q(s)\bigr)\Bigr)ds\le\notag\\
  \le{}&\frac{m}{2}e^{-at}\lvert\dot q(t)\rvert^2
  +e^{-at}U_{\sup}+{}\\
  &{}+\frac{1}{2}\int_{t_0}^t e^{-as}\Bigl(
  2k\lvert\dot q(s)\rvert^3
  +am\lvert\dot q(s)\rvert^2\Bigr)ds.
  \label{quadraticInequality}
\end{align}
We wish to compare the smooth scalar function $t\mapsto\lvert\dot q(t)\rvert^2$ with the solution $z(t)$ of the integral equation
\begin{equation}\label{integralEquation}
 \frac{m}{2}e^{-at_0}\lvert\dot q(t_0)\rvert^2\equiv
  \frac{m}{2}e^{-at}z(t)
   +e^{-at}U_{\sup}+\frac{1}{2}\int_{t_0}^t e^{-as}\Bigl(
  2k\bigl|z(s)\bigr|^{3/2}+az(s)\Bigr)ds,
\end{equation}
which is equivalent to a Cauchy problem for a differential equation with separated variables:
\begin{gather}\label{auxiliaryDiffEq}
  \frac{z'(t)}{kz(t)^{3/2}-aU_{\sup}}=-\frac{2}{m},\\
  \label{initialValueForZ}
  z(t_0)=\lvert\dot q(t_0)\rvert^2-\frac{2}{m}U_{\sup}.
\end{gather}
Suppose that
\begin{equation}\label{initialCondition}
  \lvert\dot q(t_0)\rvert^2>
  \frac{2U_{\sup}}{m}+\Bigl(\frac{aU_{\sup}}{k}\Bigr)^{2/3}\,,
\end{equation}
so that $kz(t_0)^{3/2}-aU_{\sup}>0$. Then the denominator in~\eqref{auxiliaryDiffEq} is~$>0$, $z(t)$ is decreasing and it explodes in the past at a finite time~$t^*<t_0$ given by integrating the differential equation:
\begin{equation}\label{explosionTime}
  \int_{z(t_0)}^{+\infty}\frac{du}{ku^{3/2}-aU_{\sup}}=
  -\frac{2}{m}(t^*-t_0).
\end{equation}
The inequality $z(t)<\lvert\dot q(t)\rvert^2$ holds in a neighbourhood of $t=t_0$. To prove that it holds for all $t\in\mathopen\rbrack t^*,t_0\rbrack$, suppose that there exists a time $t_1<0$ such that $z(t_1)=\lvert\dot q(t_1)\rvert^2$ and that $z(t)<\lvert\dot q(t)\rvert^2$ holds for all $t\in\mathopen\rbrack t_1,t_0\rbrack$. Then we can concatenate 
\eqref{quadraticInequality} with~\eqref{integralEquation}:
\begin{align*}
  \frac{m}{2}e^{-at_0}{}&\lvert\dot q(t_0)\rvert^2 \le{}\\
  \le{}&\frac{m}{2}e^{-at_1}\lvert\dot q(t_1)\rvert^2
  +e^{-at_1}U_{\sup}+
  \frac{1}{2}\int_{t_0}^{t_1} e^{-as}\Bigl(
  2k\lvert\dot q(s)\rvert^3
  +a\lvert\dot q(s)\rvert^2\Bigr)ds=\\
  ={}&\frac{m}{2}e^{-at_1}z(t_1)
  +e^{-at_1}U_{\sup}+ \frac{1}{2}\int_{t_0}^{t_1} e^{-as}\Bigl(
  2k\lvert\dot q(s)\rvert^3+a\lvert\dot q(s)\rvert^2\Bigr)ds
  <\\
  <{}&\frac{m}{2}e^{-at_1}z(t_1)
  +e^{-at_1}U_{\sup}+ \frac{1}{2}\int_{t_0}^{t_1} e^{-as}\Bigl(
  2k\lvert z(s)\rvert^{3/2}+az(s)\Bigr)ds=\\
  ={}&\frac{m}{2}e^{-at_0}\lvert\dot q(t_0)\rvert^2,
\end{align*}
which is impossible. We conclude that, for $t<t_0$, $\lvert\dot q(t)\rvert^2$ is controlled from below, as long as it exists, by a function $z(t)$ that explodes to~$+\infty$ in finite time. Since the constant $a>0$ can be chosen arbitrarily small, the inequality~\eqref{initialCondition} can be replaced by $\lvert\dot q(t_0)\rvert^2>2U_{\sup}/m$, which is nicely equivalent to $\frac{m}{2}\lvert\dot q(t_0)\rvert^2>U_{\sup}$.

\emph{Conclusion: if $0\le U\le U_{\sup}<+\infty$, all the solutions to the differential equation~\eqref{quadraticResistanceEquation} for which the initial kinetic energy $\frac{m}{2}\lvert\dot q(t_0) \rvert^2$ is strictly greater than $U_{\sup}$ explode in the past in finite time}. 

\section{The Maxwell-Bloch equations}\label{MBequations}

The Maxwell-Bloch equations are well-known to describe laser dynamics for a system of two-level atoms in a cavity resonator. They were first derived in a 1965 paper by Arecchi et Bonifacio. The so called resonant case can be written as
\begin{equation}\label{ELDissipativeConcrete}
  \begin{cases}
  \ddot q_1=-abq_1-(a+b)\dot q_1+g^2q_1\dot q_3\\
  \ddot q_2=-abq_2-(a+b)\dot q_2+g^2q_2\dot q_3\\
  \ddot q_3=-a\bigl(q_1^2+q_2^2\bigr)
  -c(\dot q_3-k)
  -\bigl(q_1\dot q_1+q_2\dot q_2\bigr),
  \end{cases}
\end{equation} 
which has the Lagrangian form~\eqref{Lagrange} with the following choice of $L,Q$:
\begin{gather}\label{MB}
  L=\frac{1}{2}
  \bigl(\dot q_1^2+\dot q_2^2+g^2\dot q_3^2+(q_1^2+q_2^2)
  (g^2\dot q_3-ab\bigr)\bigr), \\
  \label{Qdissipative}
  Q=-\Bigl((a+b)\dot q_1,
  (a+b)\dot q_2,
  ag^2\bigl(q_1^2+q_2^2\bigr)+cg^2(\dot q_3-k)\Bigr),
\end{gather}
where $a,b,c\ge 0$, $g>0$, $k\in\mathbb{R}$ are parameters (see Arecchi and Meucci~\cite{ArecchiMeucci} and our paper with Residori~\cite{laserdiss}). We are going to briefly describe two kinds of nonstandard separation of variables that hold when $a=b=c=0$ (conservative case) and when $a,b,c>0$ (dissipative) with $c=2a$. From these separations, we deduced or conjectured some dynamical features which we will not repeat here. All details are in our paper~\cite{GZMBcons} in the conservative case, and in the already cited~\cite{laserdiss} in the dissipative case.

\subsection{Conservative case $a=b=c=0$}
\label{conservativeSubsection}

Using the anisotropic scaling family
\begin{equation}
  q_\lambda(t)=\bigl(e^{\lambda}q_1(t), 
  e^{\lambda}q_2(t), e^{-2\lambda}q_3(t)\bigr)\,,
\end{equation}
the constant of motion~\eqref{veryGeneralConstantAlongMotion} becomes
\begin{equation}\label{prefish}
  \ddot q_3(t)+2Bg^2 q_3(t)+2Et-3g^2\int_{t_0}^t \dot q_3(s)^2ds\,,
\end{equation}
were $B,E$ are the known first integrals
\begin{equation}
  E=\frac{1}{2}\bigl(\dot q_1^2+\dot q_2^2+g^2\dot q_3^2\bigr),
  \qquad
  B=\dot q_3+\frac{1}{2}(q_1^2+q_2^2).
 \end{equation}
By derivation of \eqref{prefish} with respect to~$t$ we get the differential equation of order~2 for~$z:=\dot q_3$ 
\begin{equation}\label{thirdorder}
  \ddot z(t)+2B g^2 z(t)+2E-3 g^2 z(t)^2=0.
\end{equation}
which has two equilibria with a \emph{fish}-shaped separatrix. From this well-known equation it is easy to classify the conditions for the solution in $z:=\dot q_3$ to be periodic or homoclinic, as shown in Fig.~\ref{genericAndHomoclinic}.

The $(q_1,q_2)$ obeys a central force dynamics. Indeed, plugging $\dot q_3=B-\frac{1}{2} r^2$, with $r^2=q_1^2+q_2^2$, into the first two Lagrange equations we have
\begin{equation}
  \ddot q_1=g^2\Bigl(B-\frac{1}{2} r^2\Bigr)q_1,\qquad
  \ddot q_2=g^2\Bigl(B-\frac{1}{2} r^2\Bigr)q_2.
\end{equation}
 
\begin{figure}
\centering
\includegraphics[width=0.45\textwidth]{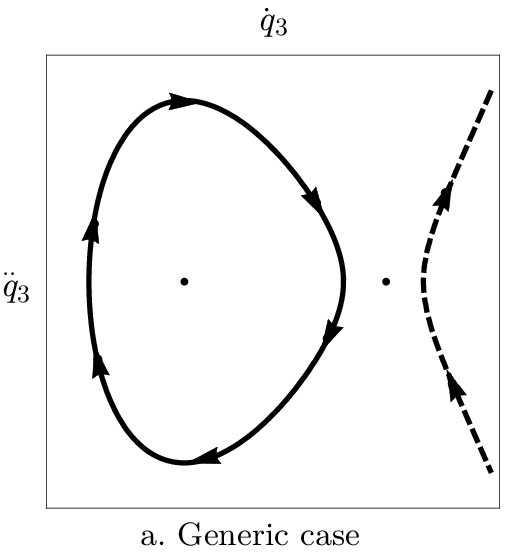}%
\qquad\quad
\includegraphics[width=0.45\textwidth]{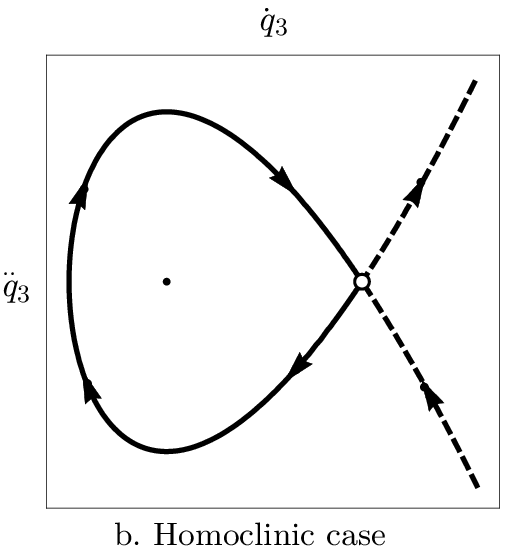}
\caption{Generic (periodic) and homoclinic orbits of $(\dot q_3,\ddot q_3)$ in the conservative case of the Maxwell-Bloch equations (Subsec.~\ref{conservativeSubsection}); the dashed lines and the dots are not visited by solutions, but they are level sets or stationary points of the potential function associated with equation~\eqref{thirdorder}.}
\label{genericAndHomoclinic}
\end{figure}


\subsection{Dissipative case $a,b,c>0$}
\label{dissipativeSubsection}

The family $q_\lambda(t)= q(t)+\lambda(0,0,2e^{ct})$ in~\eqref{veryGeneralConstantAlongMotion} gives the constant of motion
\begin{equation*}
 g^2e^{ct}\bigl(q_1(t)^2+q_2(t)^2+2\dot q_3(t)-2k\bigr)+
  (2a-c)g^2\int e^{ct}\bigl(q_1(t)^2+q_2(t)^2\bigr)dt.
\end{equation*}
When $c=2a$ we have a first integral 
\begin{equation}\label{firstIntegralM}
  M= e^{2at}\bigl(q_1^2+q_2^2+2\dot q_3-2k\bigr)\,,
\end{equation}
which permits a separation of the $q_1,q_2$ variables. In polar coordinates $(r,\theta)$ in the $q_1,q_2$ plane we have
\begin{gather}\label{separataR}
  \ddot r=-(a+b)\dot r+
  \Bigl(g^2k-ab+\frac{g^2M}{2} e^{-2at}\Bigr)r
  -\frac{g^2}{2}r^3+
  \frac{N^2}{r^3}e^{-2(a+b)t},\\
  \label{arealintegral}
  N=e^{(a+b)t}r^2\dot\theta, 
\end{gather}
where $N$ is another constant of motion which can be deduced from \eqref{veryGeneralConstantAlongMotion} with the rotation family
  \begin{equation}
  q_\lambda(t)=
  \bigl(q_1(t)\cos\lambda+
  q_2(t)\sin\lambda,
  -q_1(t)\sin\lambda+q_2(t)\cos\lambda,
  q_3(t)\bigr).
\end{equation}
We can solve for the remaining variable~$q_3$ using the first integral~$M$ of formula~\eqref{firstIntegralM}:
\begin{equation}
  \dot q_3(t)= k-\frac{1}{2}r(t)^2+\frac{1}{2}Me^{-2at}.
\end{equation}

Let us have a look at what happens if the time exponentials $e^{-2at}$ and $e^{-2(a+b)t}$ in equation~\eqref{separataR} are replaced by their limit~0 as $t\to+\infty$:
\begin{equation}\label{separataRasintotica}
  \ddot r=-(a+b)\dot r+(g^2k-ab)r
  -\frac{g^2}{2}r^3.
\end{equation}
This limiting equation has constant solutions corresponding to the nonnegative solutions of the algebraic equation
\begin{equation}\label{equilibriumRadiusEquation}
  (g^2k-ab)r-\frac{g^2}{2}r^3=0.
\end{equation}
There are clearly two cases:

\begin{itemize}

\item If $g^2k\le ab$ then equation~\eqref{equilibriumRadiusEquation} has only the solution $r=0$. We conjecture that $r(t)\to0$ for all solutions of the original equation~\eqref{separataR}. Figure~\ref{dissipativeBlochFigures}a shows such a trajectory on the $q_1,q_2$ plane.

\item If $g^2k>ab$ we have two nonnegative solutions $r=0$ and
\begin{equation}\label{asymtpticRadius}
  r_\infty=\frac{1}{g}\sqrt{2(g^2k-ab)}.
\end{equation}
The conjecture is that the nontrivial solutions in the $q_1,q_2$ plane converge to a point on the circle with radius $r_\infty$ and center in the origin, as in Figure~\ref{dissipativeBlochFigures}b.
\end{itemize}

\begin{figure}
\centering
\includegraphics[width=0.45\textwidth]{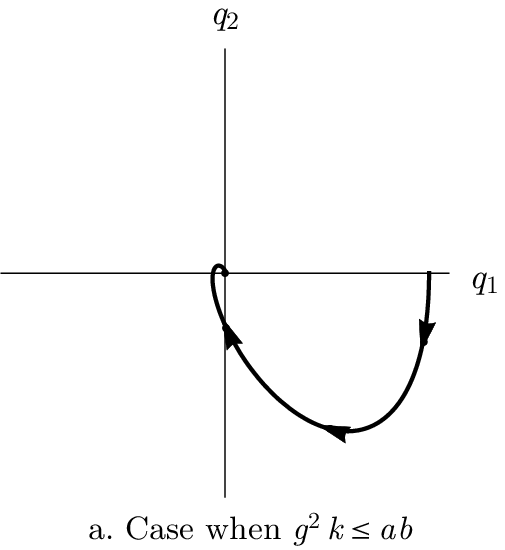}%
\qquad \quad
\includegraphics[width=0.45\textwidth]{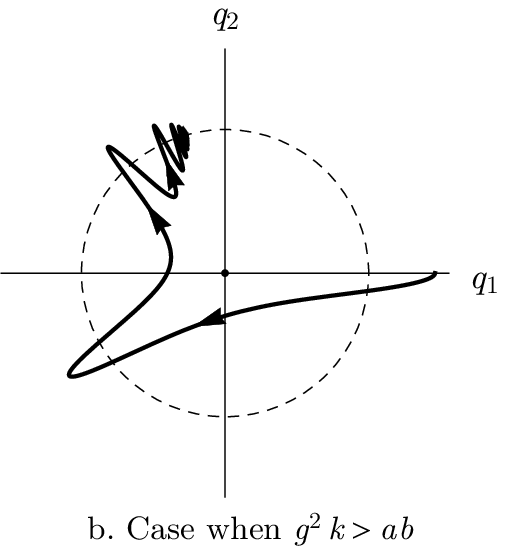}
\caption{Projection of forward orbits on the $q_1,q_2$ plane in two dissipative cases with $c=2a$ of the Maxwell-Bloch equations (Subsec.~\ref{dissipativeSubsection}), computed numerically. On the left with $g^2k\le ab$ the solution goes to the origin; on the right with $g^2k> ab$ the orbit converges to a point on the (dashed) circle with radius $r_\infty$, as in equation~\eqref{asymtpticRadius}.}
\label{dissipativeBlochFigures}
\end{figure}


\section{Conclusions}

When studying mechanical systems with a Lagrangian structure, we think that it is worthwhile to apply Theorem~\ref{mainTheorem} in search of useful integral constants of motion. At the moment the choice of the family~$q_\lambda$ is more of an art, rather than a science. However, we hope this paper provides enough concrete examples to stimulate the curiosity of the reader.


\section*{Acknowledgments}
 The research was done under the auspices of INdAM (Isti\-tuto Nazio\-nale di Alta Mate\-ma\-tica). G.Z. is deeply grateful to his surgeon prof. Fede\-ri\-co Rea, a true luminary.



\begin{thebibliography}{9}
\frenchspacing

\bibitem{ArecchiMeucci}
\newblock  F. T. Arecchi and R. Meucci,
\newblock  Chaos in lasers.
\newblock  \emph{Scholarpedia} 3(9):7066 (2008).

\bibitem{Calogero}
\newblock F. Calogero,
\newblock  Solutions of the  one dimensional $n$-body problems with quadratic and/or inversely quadratic pair potentials.
\newblock  \emph{J. Math. Phys}, \textbf{12} (1971),
419--436.

\bibitem{GorniZampieri}
\newblock  G. Gorni and G. Zampieri,
\newblock  Revisiting Noether's theorem on constants of motion.
\newblock  \emph{Journal of Nonlinear Mathematical Physics}, \textbf{21},
No.~1 (2014), 43--73.

\bibitem{GZviscous}
\newblock  G. Gorni and G. Zampieri,
\newblock  Nonlocal variational constants of motion in dissipative dynamics.
\newblock \emph{Differential and Integral Equations}, \textbf{30} (2017), 631--640. 

\bibitem{GZMBcons}
\newblock  G. Gorni and G. Zampieri,
\newblock  Nonstandard separation of variables for the Maxwell-Bloch conservative system.
\newblock \emph{S\~ao Paulo J. Math. Sci.}, \textbf{12}, No.~1  (2018), 146--169.

\bibitem{GZKilling}
\newblock  G. Gorni and G. Zampieri,
\newblock  Nonlocal and nonvariational
 extensions of Killing- type equations.
\newblock \emph{Discrete Contin. Dyn. Syst. Ser. S}, \textbf{11}, No.~4 (2018), 675--689. 

\bibitem{laserdiss} 
\newblock G.  Gorni, S. Residori, G. Zampieri,
\newblock A quasi separable 
dissipative Maxwell- Bloch  system for laser dynamics. 
\newblock \emph{Qual. Theory Dyn. Syst.}, \textbf{18}, No.~2 (2019),  371--381.   

 
\bibitem{Leach}
\newblock  P. G. L. Leach,
\newblock  Lie symmetries and Noether symmetries.
\newblock  \emph{Applicable Analysis and Discrete Mathematics}, \textbf{6} (2012),
238--246.

\bibitem{Leone}
\newblock  R. Leone and T. Gourieux,
\newblock  Classical Noether theory with application to the linearly damped particle.
\newblock  \emph{European Journal of Physics}, \textbf{36} (2015)
065022, 20pp.

\end{thebibliography}
\end{document}